\documentclass[12pt,twoside,letter]{amsart}
\usepackage{mathrsfs}
\usepackage{mathrsfs}
\usepackage{txfonts}
\usepackage{bbm}
\usepackage{mathrsfs}
\usepackage{amsfonts}
\usepackage{amsmath}
\usepackage{amssymb}
\usepackage{wasysym}
\usepackage{psfrag}
\usepackage{graphics,epsfig,amsmath}  
\setlength{\unitlength}{1cm}
\usepackage{comment}
\usepackage{xcolor}
\usepackage{enumerate}
\newtheorem{theorem}{Theorem}

\newtheorem{Remark}{Remark}
\newtheorem{lemma}{Lemma}
\newtheorem{prop}{Proposition}

\def\eps{\varepsilon}
\def\complex{{\mathbb C}}
\def\torus{{\mathbb T}}
\def\real{{\mathbb R}}
\def\integer{{\mathbb Z}}

\def\dist{{\rm dist}}

\title[Resonant quasi-periodic equilibria]{Resonant Equilibrium configurations in quasi-periodic media: perturbative expansions}

\author[R. de la Llave]{Rafael de la Llave}
\address{School of Mathematics\\
Georgia Institute of Technology \\
686 Cherry St. \\
Atlanta GA 30332, USA}
\address{
JLU-GT joint institute for Theoretical Science\\
Jilin University\\
Changchun, 130012, CHINA}
\email{rafael.delallave@math.gatech.edu}

\author[X. Su] {Xifeng Su}
\address{School of Mathematical Sciences\\
Beijing Normal University\\
No. 19, XinJieKouWai St.,HaiDian District\\
 Beijing 100875, P. R. China}
\email{xfsu@bnu.edu.cn}

\author[L. Zhang]{Lei Zhang}
\address{School of Mathematics\\
Georgia Institute of Technology \\
686 Cherry St. \\
Atlanta GA 30332, USA}
\email{lzhang98@math.gatech.edu}

\thanks{L. Z and R. L. supported by NSF grant DMS-1500943}
\thanks{X. S supported by National Natural Science Foundation of China (Grant No. 11301513) and ``the Fundamental Research Funds for the Central Universities"}

\begin{document}
\maketitle
\vspace{0.1in}

\begin{abstract}
We consider 1-D quasi-periodic Frenkel-Kontorova models.

We study the existence of equilibria whose frequency (i.e.
the inverse of the density of deposited material) is resonant
with the frequencies  of the substratum.

We study perturbation theory for small potential. We show that there
are perturbative expansions to all orders for the quasi-periodic
equilibria with resonant frequencies.  Under very general conditions,
we show that there are at least two such perturbative expansions for equilibria
for small values of the parameter.

We also develop a dynamical interpretation of the equilibria in these quasi-periodic
media. We show that equilibria are orbits of a dynamical system which has very unusual properties.
We obtain results on the Lyapunov exponents of the dynamical systems, i.e. the phonon gap  of the resonant quasi-periodic equilibria. We show that the equilibria can be pinned even if the gap is zero.
\end{abstract}

\keywords{Quasi-periodic Frenkel-Kontorova models,  resonant frequencies,
equilibria, quasicrystals, Lindstedt series, counterterms}

\subjclass[2010]{
70K43, 
37J50, 
37J40,  
52C23 
}

\section{Introduction}
The goal of this paper is to formulate the theory of resonant equilibria in quasi-periodic Frenkel-Kontorova models. We argue that these equilibria play an important role in the phenomenon of pinning (the equilibria that survive after an external force is applied). In periodic Frenkel-Kontorova models, the role of resonances in pinning is well established.

We recall that in Frenkel-Kontorova models\cite{BK04, Selke2}, one considers configurations given by a sequence of real numbers (think of the position of a sequence of particles deposited on a 1-D material). The (formal) energy of the system is the sum of a term of interaction between nearest neighbors of the deposited material  and a term modeling interaction with the media. In the quasi-periodic Frenkel-Kontorova models studied here, the interacting potential will be a quasi-periodic function of the position reflecting that the medium is quasi-periodic. We will be interested in equilibria, i.e., configurations such that the derivatives of the (formal) energy with respect to the position of each of the particles vanish. We note that even if the energy is a formal sum, the equilibrium equations are well defined. More details of the models will be discussed in Section~\ref{models}.

In \cite{SuL1, SuL2}, one can find a rigorous mathematical theory of
quasi-periodic solutions whose frequency is not resonant (indeed
Diophantine) with the frequencies of the substratum. The rigorous
theory of \cite{SuL1, SuL2} also leads to efficient algorithms that
can compute these quasi-periodic solutions arbitrarily close to their
breakdown. Implementations of these algorithms and investigation of
the phenomena at breakdown appear in \cite{BlassL}. The paper  \cite{SuL1},
studies models with nearest neighbor interaction while
\cite{SuL2} studies the case of long range interactions.

Our motivation is to study the phenomena of ``depinning". When we add an external force (no matter how small) to the model, many quasi-periodic solutions disappear. Nevertheless, there are still other quasi-periodic solutions survive. This physically corresponds to the deposited material rearranges itself to withstand the force. It is well known in the periodic Frenkel-Kontorova models that the solutions which persist under forcing are resonant with the media. The papers \cite{SuL1, SuL2} also show that, in the quasi-periodic case, the smooth non-resonant solutions do not exist when there is an external force. Hence we are interested in resonant solutions in the quasi-periodic models.


In this paper, we show that there exist at least two formal power series in the amplitude of the coupling describing resonant solutions even in the presence of external forces. The delicate analytic question of convergence of these series will be pursued in \cite{ZhangSL15}. We also develop a dynamical interpretation of the equilibria as orbits of a dynamical system. Using this dynamical interpretation, we show that the phonon gap of quasi-periodic equilibria is zero.

We hope that this paper can lay the ground work for future explorations. We will use rigorous mathematical tools to study the convergence of the formal series developed here in \cite{ZhangSL15}. We will also use numerical methods to explore in a non-rigorous but more quantative way some of the phenomena discussed here.

Equilibria in quasi-periodic media with a resonant frequency  have been investigated numerically in
\cite{vanErp'99, vanErp'01, vanErp'02}. These papers also studied the
phonon gap and found it to vanish when there are smooth solutions
(in agreement with the results here).

The variational and topological methods that have been proved useful in the periodic case do not extend to the quasi-periodic case in its full strength. Several interesting counterexamples are in \cite{LS'03, Federer'74}. Partial results are in \cite{Gambaudo, Aliste10, GPT13, KunzeO13}.

\section{Models considered and formulation of the problem}
\label{models}
We consider models of deposition in a quasi-periodic one-dimensional
medium.

If $x_n\in\mathbb{R}$ denotes the position of the $n$-th particle of the deposited material,
the state of the system is specified by a configuration, i.e. a sequence $\{x_n\}_{n\in\mathbb{Z}}$.
We associate the following formal energy to a configuration of
the system
\begin{equation}\label{formal energy}
\mathscr{S}(x) = \frac{1}{2}\sum_{n\in\mathbb{Z}} (x_{n+1} -  x_n - a )^2 -
V(x_n \alpha) - \lambda x_n
\end{equation} where $V:\mathbb{T}^d\rightarrow \mathbb{R}$ is an
analytic function, $\alpha\in \mathbb{R}^d$ is an irrational
vector and $a,\lambda$ are some real numbers.

A natural example often used is

\begin{equation}\label{formal energy:example}
\mathscr{S}(x) = \frac{1}{2}\sum_{n\in\mathbb{Z}} (x_{n+1} -  x_n - a )^2 +A\cos x_n+B\cos(\sqrt{2}x_n+\phi)- \lambda x_n
\end{equation} which corresponds to $\alpha=(1,\sqrt{2})$ and $V(\theta_1,\theta_2)=A\cos\theta_1+B\cos(\theta_2+\phi)$.

The term $( x_{n+1} - x_n -a )^2$ represents the interaction among
neighboring deposited atoms. The term $V(x_n\alpha)$ represents the
interaction with the substratum. The interaction at position $x\in\mathbb{R}$ is the quasi-periodic function $V(x\alpha)$. This models that the substratum is quasi-periodic.
The term $\lambda x_n$ has the interpretation of a constant field
applied to the model. In the case of deposited materials, we
can imagine that the sample is tilted and $\lambda$ is
the component of the gravity. The physical meaning of the equilibria corresponding to $\lambda\neq 0$ is that, when we apply a small external field, the configurations rearrange themselves so that they can respond to the force and do not slide. Of course, when the force is strong enough, no rearrangement is possible and all the configurations slide. This is the microscopic origin of static friction.

In the case of periodic media, the pinned solutions are known to correspond to resonant frequencies. Since the resonant tori do not survive, it is reasonable to consider resonant frequencies.

The existence of external forces  $\lambda$
is a very important novelty with respect to
the previous papers \cite{SuL1, SuL2}. It was shown in \cite{SuL1, SuL2} that if there is
non-resonant quasi-periodic solution, then
$\lambda = 0$. In our case, we will show how to construct quasi-periodic equilibria  with nontrivial $\lambda$ and will show how to compute perturbatively the range of such $\lambda$ for which solutions with a prescribed resonant frequency exist.

Without any loss of generality, we can assume that
\begin{equation}\label{alpha nonresonant}
k\cdot\alpha\notin \mathbb{N}\quad \forall ~k\in \mathbb{Z}^d-\{0\}.
\end{equation}
If there existed a resonance $k\cdot \alpha=0$, we could just use
less frequencies to express the quasi-periodic function.

\subsection{Equilibrium equations}
A configuration is in equilibrium if the forces acting on all the
particles vanish. Equivalently, the derivatives of the energy with
respect to the position of the particles vanish. That is,
\[
\frac{\partial \mathscr{S}}{\partial x_n}(x) = 0 \qquad \forall ~n\in
\mathbb{Z}.
\]

In the model \eqref{formal energy}, the equilibrium equations are
\begin{equation}\label{equilibrium}
x_{n+1} + x_{n-1} - 2 x_n + \partial_\alpha V(x_n\alpha) + \lambda = 0 \quad
\forall~ n\in \mathbb{Z}
\end{equation}
where $\partial_\alpha = \alpha\cdot \nabla$ and $\nabla$ is the usual gradient.

Note that even if the energy \eqref{formal energy} is just a formal
sum, the equilibrium equations \eqref{equilibrium} are well defined
equations.

It is very tempting to consider \eqref{equilibrium} as a dynamical system, so that we obtain $x_{n+1}$ as a function of $x_n$ and $x_{n-1}$. This system has very unusual properties. This will be pursued in Section \ref{sec:dynamical}.

\subsection{Quasi-periodic configurations, hull functions}
\label{sec:hull}
In this paper, we will be interested in quasi-periodic solutions of
frequency $\omega\in\mathbb{R}$.

These are configurations of the form
\begin{equation}\label{hull}
x_n = n\omega + h(n\omega\alpha),
\end{equation}where $h:\mathbb{T}^d\rightarrow \mathbb{R}$.

A configuration given by a hull function \eqref{hull} satisfies the
equilibrium equation \eqref{equilibrium} if and only if the hull
function $h$ satisfies
\begin{equation}\label{hull equilibrium}
h(n\omega \alpha + \omega\alpha) + h(n\omega \alpha - \omega\alpha)
- 2 h(n\omega \alpha) + \partial_\alpha V(n\omega\alpha + \alpha
h(n\omega\alpha)) +\lambda = 0.
\end{equation}

The equation \eqref{hull equilibrium} was considered in
\cite{SuL1,SuL2} when $\omega\alpha$ is Diophantine (in particular,
$n\omega\alpha$ is dense in the torus $\mathbb{T}^d$).

In our case, $n\omega \alpha$ will not be dense on the $d$-dimensional
torus (see Section~\ref{sec:resonances})
and the equilibrium equations we will derive are different from those in \cite{SuL1,SuL2}.

\subsection{Resonances}
\label{sec:resonances}
The goal of this paper is to study situations when there are
$k\in\mathbb{Z}^d-\{0\}$ and $m\in\mathbb{Z}$ such that
\begin{equation}\label{resonance}
k\cdot\omega\alpha - m = 0.
\end{equation}
When \eqref{resonance} holds we say that $(k,m)$ is a discrete
resonance for $\omega\alpha$ and we refer to the pair $(k,m)$ as a
resonance.

\begin{Remark}
Note that these discrete resonances \eqref{resonance}
are different from the resonances of the media
we excluded before ($k\cdot \alpha\neq 0$, $\forall ~k\in\mathbb{Z}^d-\{0\}$).
\end{Remark}
\begin{Remark}
If  \[k\cdot\alpha\neq0~\quad\forall
~k\in\mathbb{Z}^d\setminus \{0\},\]
given any $k_0 \in\mathbb{Z}^d \setminus \{0\}$, $m \in \mathbb{Z}$
we have that $\omega = -m/(k_0 \cdot \alpha)$ is a resonant frequency.
Since $k_0 \cdot \alpha$ can be arbitrarily large, we see that the
set of resonant frequencies is dense on the real line. Of course, once we fix $\alpha$, the set of resonant $\omega$ is a countable set.
\end{Remark}

\subsubsection{Multiplicity of a resonance}
 Clearly, if $(k, m)$, $(\tilde{k},
\tilde{m})$ are discrete resonances so is $(k+\tilde{k},
m+\tilde{m})$.

In mathematical language,
\[
\mathscr{M}_{\omega\alpha} = \left\{(k,m)\in
\mathbb{Z}^d\times\mathbb{Z}~:~k\cdot \omega\alpha-m =0 \right\}
\]
is a $\mathbb{Z}$-module called the resonance module for $\omega$.

We denote by $l(\omega) =dim(\mathscr{M}_{\omega\alpha})$ the dimension of
the resonance module and we call it the multiplicity of
the resonance.  The meaning of $l(\omega)$ is the number of independent
resonances. We can find $(k_1, m_1),\ldots, (k_l,m_l)$ in such a way
that all resonances can be expressed as combinations of the basic
resonances (and also no other set of basic resonances with smaller
number of elements will allow to express all the resonances).

\subsubsection{Only resonances of multiplicity 1 appear in the models \eqref{formal energy}}
\label{multiplicity 1}

In Hamiltonian mechanics for systems with $d$ degrees of
freedom,  one can find resonances of all
multiplicities up to $d$. As we will see later, in Section \ref{sec:dynamical}, one can give a dynamical interpretation of the equilibrium equations as a dynamical system in $d+1$ dimensions. Nevertheless, in our models only $l=1$ appears
independently of the number of degrees of freedom.
This highlights that the problem here is different from the Hamiltonian problem.
\begin{prop}
  If $\omega\alpha$ is resonant, i.e. $\mathscr{M}_{\omega\alpha}\neq \{0\}$, then $l(\omega)=1$.
\end{prop}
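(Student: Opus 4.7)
The plan is to show that any two nonzero elements of $\mathscr{M}_{\omega\alpha}$ are $\mathbb{Q}$-proportional, which forces the module to have rank $1$.

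First I would unpack the hypothesis: by assumption $\mathscr{M}_{\omega\alpha} \neq \{0\}$, so pick some $(k_1,m_1) \in \mathscr{M}_{\omega\alpha}$ with $(k_1,m_1) \neq (0,0)$. Note that $k_1 = 0$ would force $m_1 = 0$, so $k_1 \neq 0$; conversely, if $m_1 = 0$, then $k_1 \cdot \omega\alpha = 0$, and since $\omega \neq 0$ (otherwise $m_1$ would also be zero), we would get $k_1 \cdot \alpha = 0$, contradicting the standing assumption \eqref{alpha nonresonant} on $\alpha$. Hence $m_1 \neq 0$ for every nonzero element of the module.

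Next, I would take a second nonzero element $(k_2,m_2) \in \mathscr{M}_{\omega\alpha}$ and attempt to produce an integer relation between the two. From
\[
k_1 \cdot \omega\alpha = m_1, \qquad k_2 \cdot \omega\alpha = m_2,
\]
the combination $m_2 k_1 - m_1 k_2 \in \mathbb{Z}^d$ satisfies
\[
(m_2 k_1 - m_1 k_2) \cdot \omega\alpha = m_2 m_1 - m_1 m_2 = 0.
\]
Dividing by $\omega \neq 0$ gives $(m_2 k_1 - m_1 k_2)\cdot \alpha = 0$, and the non-resonance assumption on $\alpha$ then forces
\[
m_2 k_1 - m_1 k_2 = 0 \quad \text{in } \mathbb{Z}^d.
\]
Combined with $m_2 m_1 - m_1 m_2 = 0$, this says $(m_2,\,m_2 k_1) = (m_1,\,m_1 k_2)$-scaled appropriately, i.e., $m_1(k_2,m_2) = m_2(k_1,m_1)$ in $\mathbb{Z}^{d+1}$. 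Hence $(k_1,m_1)$ and $(k_2,m_2)$ are proportional over $\mathbb{Q}$.

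Finally, I would conclude: since any two nonzero elements of $\mathscr{M}_{\omega\alpha}$ lie on a single rational line through the origin in $\mathbb{Q}^{d+1}$, the module $\mathscr{M}_{\omega\alpha}$ is contained in a rank-$1$ subgroup of $\mathbb{Z}^{d+1}$; being nontrivial by hypothesis, its rank is exactly $1$, so $l(\omega) = 1$. There is no real obstacle here beyond being careful that the $m_i$ cannot vanish (which is where hypothesis \eqref{alpha nonresonant} is used); the contrast with the Hamiltonian setting, where $\alpha$ would be replaced by a collection of independent frequencies and no such "$k\cdot\alpha \neq 0$" constraint would be available, is precisely what limits the multiplicity to $1$ here.
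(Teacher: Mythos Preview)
Your proof is correct and follows essentially the same route as the paper's: form the integer combination $m_2 k_1 - m_1 k_2$, observe that it is orthogonal to $\alpha$, and invoke the non-resonance assumption \eqref{alpha nonresonant} to conclude it vanishes. Your version is in fact a bit more careful than the paper's in spelling out why $m_i \neq 0$ and in explicitly deducing rank $1$ from the $\mathbb{Q}$-proportionality.
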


\begin{proof}
Note that
\[
k_1\cdot \omega\alpha - m_1 = k_2\cdot \omega\alpha - m_2 =0
\]
implies (because $m_1\neq 0, ~m_2\neq 0$ because of \eqref{alpha
nonresonant})
\[
\omega = \frac{m_1}{k_1\cdot \alpha} = \frac{m_2}{k_2\cdot \alpha}
\] and therefore
\[
\alpha\cdot (k_1 m_2 - k_2 m_1) = 0
\] and, because $\alpha$ is non-resonant \eqref{alpha nonresonant} we have
\[
k_1 m_2 = k_2 m_1.
\]
Therefore, the two resonant vectors are related.
\end{proof}

\subsubsection{The intrinsic frequencies}
When $\omega\alpha$ is resonant, we can find a matrix $B\in SL(d, \mathbb{Z})$,
$\Omega \in \mathbb{R}^{d-1}$, $L \in \mathbb{Z}^d$ in such a way that
\begin{equation}\label{Bdefined}
B\omega\alpha = (\Omega, 0)  + L \quad \text{with}\quad \Omega\cdot
k\notin \mathbb{Z} \text{ for } k\in
\mathbb{Z}^{d-1}-\{0\}.
\end{equation}

We will refer to $\Omega$'s as the intrinsic frequencies.
They are essentially unique, i.e., unique up to changes of basis in $\mathbb{R}^{d-1}$ given by a matrix in  $SL(d-1, \mathbb{Z})$.

In this case, the set $\{n\omega\alpha\}_{n\in\mathbb{Z}}$ has a
closure which is a $d-1$ dimensional torus. This torus is invariant
under the translation $T_{\omega\alpha}$. If we stay in this $d-1$ dimensional torus, $T_{\omega\alpha}$ can be described as $T_{\Omega}$. The torus $\mathbb{T}^d$ is
foliated by these $\mathbb{T}^{d-1}$ indexed by another parameter
$\eta\in\mathbb{T}^1$. We will write a point in $\mathbb{T}^d$ as
$(\psi,\eta)$ where $\psi$ is the coordinate corresponding to the
position in $\mathbb{T}^{d-1}$. The coordinate $\eta$ selects the
$d-1$ torus we are considering.

\subsection{Quasi-periodic equilibria
 with resonant frequencies}
\label{resonanthull}

The natural notion of the hull functions in the resonant case would be
to assume that the equilibrium solutions have the form
\begin{equation} \label{hull resonant}
x_n = n \omega + v(n \Omega)
\end{equation}
with $v:\mathbb{T}^{d-1} \rightarrow \mathbb{R}$.

Note that the physical meaning of $\omega$ is still the mean spacing
of the solutions (i.e.,  an inverse density). The term $v(n\Omega)$
represents fluctuations that can be parameterized in terms of the
intrinsic frequency $\Omega$. Of course, we could represent them in
terms of the original frequencies, but it is more natural to change
variables so that they become a part of the equation.

We will refer to the $\eta$ variable as the \emph{transversal phase}.
The resonant solutions considered here, cover densely a  torus
of codimension one.
The one-dimensional variable $\eta$ measures
the position of these codimension-one tori on the configuration space $\mathbb{T}^d$ corresponding to the internal phases of $V$.

If we substitute the parameterization \eqref{hull resonant}
into the equilibrium equation, we obtain that
the equilibrium equation \eqref{equilibrium} is equivalent to:
\begin{equation} \label{equilibrium resonant hull}
v(n \Omega + \Omega) +
v(n \Omega - \Omega)
- 2 v(n \Omega)  + \partial_\alpha V(n \omega \alpha  + \alpha v(n \Omega)) +\lambda = 0.
\end{equation}

If we furthermore introduce the notation
$\partial_\alpha V(\theta) =W(B \theta)$ and $B \alpha = \beta$,
and observe that the $n \Omega$
is dense on $\mathbb{T}^{d-1}$, we see that for continuous functions $v$, \eqref{equilibrium resonant hull}
is equivalent to:
\begin{equation}\label{equilibrium resonant hull3}
v(\psi + \Omega) +
v(\psi - \Omega)
- 2 v(\psi)  + W((\psi, 0)  + \beta v(\psi)) +\lambda = 0.
\end{equation}

Note that we can also consider solutions of the form
\begin{equation}\label{hull2}
  x_n=n\omega+v(n\Omega+\xi_1)+\xi_2
\end{equation}
for any fixed $\xi_1$ and $\xi_2$. This will give us freedom to add the transversal phase $\eta$ as an additional parameter in \eqref{equilibrium resonant hull3}. So the equilibrium equation we will consider is
\begin{equation}\label{equilibrium resonant hull2}
v(\psi + \Omega) +
v(\psi - \Omega)
- 2 v(\psi)  + W((\psi, \eta)  + \beta v(\psi)) +\lambda = 0.
\end{equation}

By simple calculations, it can be shown that \eqref{equilibrium resonant hull2} is equivalent to \eqref{equilibrium} when the hull function is of the form \eqref{hull2} and $B\alpha\xi_2=(\xi_1,\eta)$.

\begin{Remark}
Because $\beta$ has components
both in the $\psi$ and the $\eta$ directions, the equation
\eqref{equilibrium resonant hull2} cannot be considered as a parameterized version
of the equations considered in \cite{SuL1}.  As we will see,
the symmetries of the equation involve transformations that mix
the dependence in $\psi$ and in $\eta$.
\end{Remark}

\subsection{The symmetries of the invariance equation \eqref{equilibrium resonant hull2}}

The equation \eqref{equilibrium resonant hull2} possesses remarkable symmetries
that make the solutions not unique.  These symmetries lead to Ward identities.
In contrast with the case of non-resonant solutions, the group of symmetries
is infinite dimensional. In \cite{Rafael'08, SuL1, SuL2} these symmetries
are used to develop a KAM method.

The main observation is that
if $(v, \lambda)$ is a solution of \eqref{equilibrium resonant hull2},
then, for every $\iota(\eta): \mathbb{T}^1 \rightarrow  \mathbb{R}$,
the pair $(\tilde v,  \tilde\lambda)$ is also a solution of \eqref{equilibrium resonant hull2}
where we denote $\beta = (\beta_\psi, \beta_\eta)$ and  $\tilde v, \tilde \lambda$ are defined
by:
  \begin{equation}\label{symmetry}
  \begin{split}
  \tilde v(\psi,\eta) & =
  v\big((\psi,\eta) + \iota(\eta) \beta\big) + \iota(\eta),\\
 \tilde \lambda(\eta) &=  \lambda \big(\eta + \iota(\eta)\beta_\eta\big).
  \end{split}
  \end{equation}

Notice that the symmetry \eqref{symmetry} involves changing not only
the argument $\psi$ but also the argument $\eta$. Note the space of symmetries of the equation is not just a finite dimensional space but rather an infinite dimensional space of functions.

\subsection{A normalization of the solutions of the invariance equation \eqref{equilibrium resonant hull2}}
For later applications, it will be useful to have local
uniqueness of the solutions (e.g. to discuss smooth dependence on
parameters, perturbative expansions on parameters). Hence we impose the normalization
\begin{equation}\label{normalization}
\int_{\mathbb{T}^{d-1}} v(\psi,\eta) \, d \psi  = 0.
\end{equation}

Since the symmetry \eqref{symmetry} involves changes of
arguments,  giving a $v_\eta$, finding the $\iota(\eta)$ that accomplishes the normalization involves solving the implicit
equation
\begin{equation}\label{implicit}
 I(\eta + \beta_\eta \iota(\eta) ) + \iota(\eta)  = 0
\end{equation}
where $I(\eta) \equiv \int_{\mathbb{T}^{d-1} } v(\psi,\eta)\, d \psi$.

If $I$ and its derivative are small, one can solve eqref{implicit} using implicit function theorem.

\subsection{Diophantine condition}\label{Diophantine properties}


In contrast with KAM theory, we will not need very delicate estimates on the solutions
and hence, we can deal with very general Diophantine conditions.
We will assume that $\Omega$ satisfies
\begin{equation} \label{subexponential}
\lim_{N\to\infty} \frac{1}{N} \sup_{|k| \le N, m \in \mathbb{Z}} \bigg| \ln |k\cdot \Omega-m| \bigg|  = 0.
\end{equation}
Note that the condition \eqref{subexponential} is much weaker than the usual
Diophantine conditions and even than the Bjruno-R\"ussmann conditions.
The condition\eqref{subexponential} is the natural condition in
the study of existence of series to all orders.

The following proposition shows that the sets of frequencies we are considering are abundant.

Fix a vector $k \in \integer^d \setminus \{0\}$, $m \in \integer\setminus\{0\}$ and assume without loss of generality that there is no common divisor in the components of $k$. For any $\alpha \in \real^d$ satisfying $\alpha\cdot k\neq 0$, we can find a unique $\omega$
such that $\alpha \cdot k \omega - m  = 0$. Fix a $B_k\in SL(d,\mathbb{Z})$ such that the last row of $B$ equals $k$ (It's possible to find such $B$ since the components of $k$ has no common divisor). Then, let $B_k\alpha\omega=(\Omega,0)+m$. Hence, for any $k, m$ and $\alpha$ satisfying $\alpha\cdot k\neq 0$, we can
define $\Omega$ as a function of $\alpha$. Denote $\Omega =  F_{k,m}(\alpha)$.

\begin{prop}
\label{prop:diophantine}
The set of $\alpha$ for which
$F_{k,m}(\alpha)$ satisfies \eqref{subexponential} for all $k,m$ is of full measure in $\mathbb{R}^d$. In other words, for a full measure set of medium frequencies, we can find a countable many resonant frequencies that lead to intrinsic frequencies satisfying \eqref{subexponential}.
\end{prop}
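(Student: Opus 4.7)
The plan is to reduce the statement to the well-known fact that the set of $\Omega\in\real^{d-1}$ satisfying a classical Diophantine bound has full Lebesgue measure, and then to pull this back to the $\alpha$-variable using the explicit form of $F_{k,m}$ combined with a Fubini argument and a countable intersection over admissible pairs $(k,m)$. First, I would record that the set
$$
\mathcal{D} := \{\Omega \in \real^{d-1} : \Omega \text{ satisfies } \eqref{subexponential}\}
$$
has full Lebesgue measure in $\real^{d-1}$. Indeed, any $\Omega$ obeying even a weak classical Diophantine estimate $|k\cdot\Omega - m| \ge \tau |k|^{-\sigma}$ (for some $\tau>0$ and some $\sigma>d-1$) automatically satisfies \eqref{subexponential}, because $\bigl|\ln|k\cdot\Omega-m|\bigr| = O(\ln|k|)$ while $\sup_{|k|\le N}\ln|k|/N \to 0$. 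The set of such $\Omega$ is of full measure by a standard Borel--Cantelli argument; moreover, since $\mathcal{D}$ is invariant under translation by $\integer^{d-1}$, its complement is Lebesgue-null throughout $\real^{d-1}$.

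Next, I would analyze $F_{k,m}$ for a fixed admissible pair $(k,m)$ (with the components of $k$ coprime and $m \ne 0$). Using the volume-preserving linear change of variables $\tilde\alpha = B_k\alpha$ and the fact that $k$ is the last row of $B_k$, so $\tilde\alpha_d = k\cdot\alpha$, one obtains the explicit formula
$$
F_{k,m}(\alpha) = \frac{m}{\tilde\alpha_d}\bigl(\tilde\alpha_1,\ldots,\tilde\alpha_{d-1}\bigr) \pmod{\integer^{d-1}}
$$
on the open set $\{\alpha : k\cdot\alpha\ne 0\}$, whose complement is a hyperplane of measure zero. For each fixed $\tilde\alpha_d\ne 0$, the map $(\tilde\alpha_1,\ldots,\tilde\alpha_{d-1})\mapsto (m/\tilde\alpha_d)(\tilde\alpha_1,\ldots,\tilde\alpha_{d-1})$ is a linear bijection of $\real^{d-1}$ with nonzero Jacobian, so it preserves Lebesgue null sets, and its composition with reduction modulo $\integer^{d-1}$ has the same property. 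Hence for each such slice the preimage of $\real^{d-1}\setminus\mathcal{D}$ is null, and Fubini implies that $F_{k,m}^{-1}(\real^{d-1}\setminus\mathcal{D})$ has Lebesgue measure zero in $\real^d$.

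Finally, there are only countably many admissible pairs $(k,m)$, so the countable intersection $\bigcap_{(k,m)} F_{k,m}^{-1}(\mathcal{D})$ is still a full-measure subset of $\real^d$, which is exactly the set claimed. The only piece of bookkeeping is to verify that a different choice of matrix $B_k$ completing $k$ to a basis of $\integer^d$ alters $\Omega$ only by an $SL(d-1,\integer)$ transformation, and that such transformations preserve \eqref{subexponential}, since they merely permute the indexing lattice by a bounded factor. I do not foresee any genuine obstacle; the proof is a routine combination of the measure-theoretic abundance of Diophantine frequencies with a Fubini and change-of-variables argument.
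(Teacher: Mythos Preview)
Your argument is correct and follows essentially the same strategy as the paper: reduce to a single pair $(k,m)$ by countable intersection, use that the set of $\Omega$ satisfying \eqref{subexponential} has full measure in $\real^{d-1}$, and pull this back to the $\alpha$-variable by a measure-preserving change of coordinates combined with a one-parameter Fubini/scaling argument. The only cosmetic difference is that the paper parameterizes $\alpha$ as a nonzero scaling of a point $\gamma$ on the affine hyperplane $\Gamma=\{\gamma:\gamma\cdot k=m\}$ and observes that $F_{k,m}(\alpha)$ depends only on $\gamma$, whereas you work in the coordinates $\tilde\alpha=B_k\alpha$ and slice over $\tilde\alpha_d$; these two decompositions are equivalent, and your version is in fact more explicit (you write down the formula for $F_{k,m}$ and spell out the Fubini step and the $SL(d-1,\integer)$ ambiguity in $B_k$).
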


\begin{proof}
Since countable intersections of sets of full measure are of full measure, to prove Proposition~\ref{prop:diophantine} it suffices to show that for a fixed $k,m$ as above, the set $\{\alpha \in \real^d|\alpha\cdot k\neq 0\ \mathrm{and}\ F_{k,m}(\alpha)\ \mathrm{satisfies}\ \eqref{subexponential}\}$ is of full measure.

Because the set of $\Omega$'s which satisfy \eqref{subexponential} is of full measure on $\real^{d -1}$ and the linear map $B_k$ is differentiable and surjective, the preimage of the set of $\Omega$'s that satisfy \eqref{subexponential} under $B_k$ is also of full measure in the hyperplane $\Gamma=\{\gamma\mid \gamma\cdot k-m=0\}$. Denote the set of preimages as $\Gamma'$. Then any nonzero scaling of an element of $\Gamma'$ will give an $\alpha$ we want, which also form a full measure set in $\mathbb{R}^d$.
\end{proof}

\section{Function spaces and linear estimates}\label{cohomology}
The main tool that we will use  to construct perturbation theories is
the solution of cohomology equations.

We denote
\[
D_\rho = \{ \theta \in \complex^d/\integer^d  \mid | \text{Im}(\theta_i)|  < \rho\}
\]
and denote the Fourier expansion of a periodic mapping $v(\psi,\eta)$ on
$D_\rho$ by
\begin{equation*}
v(\psi,\eta)=\sum_{k\in \mathbb{Z}^d} v_k e^{2\pi i k\cdot (\psi,\eta)},
\end{equation*} where $\cdot$ is the Euclidean scalar product in
$\mathbb{C}^d$ and $v_k$ are the Fourier coefficients.

We denote by $\mathscr{A}_\rho$ the Banach space of analytic
functions on $D_\rho$ which are real for real argument and extend
continuously to $\overline{D_\rho}$. We make $\mathscr{A}_\rho$ a
Banach space by endowing it with the supremum norm:
\begin{equation*}
\|v\|_\rho =\sup_{(\psi,\eta)\in \overline{D_\rho}}|v(\psi,\eta)|.
\end{equation*}

These Banach spaces of analytic functions
are the same spaces as in \cite{Moser'67}.

We will consider equations of the form
\begin{equation}\label{Cohomology equation}
v(\psi+\Omega,\eta)-v(\psi,\eta)=\phi(\psi,\eta),
\end{equation}
where $\psi \in \torus^{d-1}$.

 To simplify our notations, we  will denote $v(\psi+\Omega)$ and $v(\psi-\Omega)$ as $v_+$ and $v_-$, respectively. Similar notations will be used for other functions. We also use $T$ to represent the translation operators, i.e., $T_{\Omega}v(\psi)=v(\psi+\Omega)$.

\begin{lemma}\label{estimate lemma for cohomology equation}
Let $\phi \in\mathscr{A}_\rho(\mathbb{T}^d)$  be such that
\begin{equation}\label{normalization phi}
\int_{\mathbb{T}^{d-1}}\phi (\psi,\eta)d\psi=0,
\end{equation}
for all $\eta$.

Assume that $\Omega$ satisfies the assumption
\eqref{subexponential}.

Then, for a fixed $\eta$, there
exists a unique solution $v_\eta$ of
\eqref{Cohomology equation} which satisfies
\begin{equation}\label{normalization equation}
\int_{\mathbb{T}^{d-1}} v (\psi,\eta)d\psi = 0.
\end{equation}
The solution $v \in \mathscr{A}_{\rho'} $ for
any $\rho' < \rho$
and we have
\[
|| v ||_{\rho'} \le C(\rho,\rho') || \phi_\eta||_\rho.
\]

Furthermore, any distribution solution of \eqref{Cohomology
equation} differs from the solution claimed before by a constant.

If $\phi$ is such that it takes real values for real arguments,
so does $v$.

If we consider now the dependence in $\eta$, we
have that
$v \in \mathscr{A}_{\rho'}(\mathbb{T}^d)$
and
\[
||v||_{\rho'} \le C(\rho, \rho') ||\phi||_\rho.
\]
\end{lemma}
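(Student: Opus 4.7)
The plan is to solve \eqref{Cohomology equation} mode-by-mode in Fourier series and then to establish convergence in $\mathscr{A}_{\rho'}$ using the small-divisor control afforded by \eqref{subexponential}. I would expand
$$\phi(\psi,\eta)=\sum_{(k,l)\in\integer^{d-1}\times\integer}\phi_{k,l}\,e^{2\pi i(k\cdot\psi+l\eta)}$$
and seek $v$ in the same form. The equation \eqref{Cohomology equation} becomes the scalar relations $(e^{2\pi i k\cdot\Omega}-1)v_{k,l}=\phi_{k,l}$. The hypothesis \eqref{normalization phi} translates to $\phi_{0,l}=0$ for all $l$, and the intrinsic-frequency property $k\cdot\Omega\notin\integer$ for $k\ne 0$ (built into \eqref{Bdefined}) makes the prefactor invertible when $k\ne 0$. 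Thus I set $v_{k,l}=\phi_{k,l}/(e^{2\pi i k\cdot\Omega}-1)$ for $k\ne 0$ and $v_{0,l}=0$, the latter being forced by the normalization \eqref{normalization equation}. This gives both existence and uniqueness; for the distributional uniqueness claim I would apply the same Fourier argument to the homogeneous equation, whose nonzero-$k$ coefficients must vanish, so that at each fixed $\eta$ any distribution solution differs from the normalized one only by a $\psi$-constant.

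The main quantitative step is to extract a subexponential small-divisor bound from \eqref{subexponential}. That hypothesis yields, for every $\epsilon>0$, a constant $C_\epsilon>0$ with
$$|k\cdot\Omega-m|\ge C_\epsilon e^{-\epsilon|k|},\qquad k\in\integer^{d-1}\setminus\{0\},\ m\in\integer.$$
Taking $m$ to be the nearest integer to $k\cdot\Omega$ and using the elementary bound $|e^{2\pi it}-1|\ge 4|t-m|$ for $|t-m|\le 1/2$, I obtain $|e^{2\pi i k\cdot\Omega}-1|\ge 4C_\epsilon e^{-\epsilon|k|}$. Combined with the standard analytic Fourier decay $|\phi_{k,l}|\le\|\phi\|_\rho e^{-2\pi\rho(|k|+|l|)}$,
$$|v_{k,l}|\,e^{2\pi\rho'(|k|+|l|)}\le \tfrac{1}{4}C_\epsilon^{-1}\|\phi\|_\rho\, e^{-2\pi(\rho-\rho')(|k|+|l|)+\epsilon|k|}.$$
Fixing $\epsilon<2\pi(\rho-\rho')$ makes this geometric series summable and produces $\|v\|_{\rho'}\le C(\rho,\rho')\|\phi\|_\rho$.

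The fixed-$\eta$ bound follows by applying the same argument to the $\psi$-slice $\phi(\cdot,\eta)\in\mathscr{A}_\rho(\torus^{d-1})$, and reality is preserved because the conjugation symmetry $\phi_{-k,-l}=\overline{\phi_{k,l}}$ combined with $e^{-2\pi i k\cdot\Omega}-1=\overline{e^{2\pi i k\cdot\Omega}-1}$ gives $v_{-k,-l}=\overline{v_{k,l}}$. The main obstacle I anticipate is really just the clean passage from the limit-style condition \eqref{subexponential} to the uniform subexponential small-divisor inequality above and the correct coupling of $\epsilon$ to the analyticity loss $\rho-\rho'$; once this is in hand, everything else is bookkeeping on an absolutely convergent Fourier series.
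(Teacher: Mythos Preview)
Your proposal is correct and follows essentially the same approach as the paper: solve the cohomology equation mode-by-mode in Fourier series, divide by the multiplier $e^{2\pi i k\cdot\Omega}-1$, and control the resulting small divisors via the subexponential condition \eqref{subexponential} combined with Cauchy estimates on the Fourier coefficients, losing $\rho-\rho'$ in the analyticity strip. You are in fact more explicit than the paper in extracting the uniform bound $|k\cdot\Omega-m|\ge C_\epsilon e^{-\epsilon|k|}$ from \eqref{subexponential} and in matching $\epsilon$ to $2\pi(\rho-\rho')$, and you also spell out the distributional uniqueness and the reality claim, which the paper leaves implicit.
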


\begin{proof}
We note that, as it is well known that obtaining $v$
solving \eqref{Cohomology equation} for
given $\phi$  is very explicit in
terms of Fourier coefficients.
If
\[
\phi(\psi,\eta) =  \sum_{k\ne 0}\hat \phi_k(\eta) e^{2 \pi i k \cdot \psi}
= \sum_{k \ne 0 , m} \hat \phi_{k,m}  e^{2 \pi i (k \cdot \psi + m \eta)}
\]
then,  $v$ is given by
\[
v(\psi,\eta) =
 \sum_{k \ne 0} \hat \phi_k(\eta)(e^{2 \pi i k \cdot \Omega} -1)^{-1}
e^{2 \pi i k \cdot \psi}
= \sum_{k\ne 0, m} \hat \phi_{k,m} (e^{2 \pi i k \cdot \Omega} -1)^{-1} e^{2 \pi i (k \cdot \psi + m \eta)}.
\]
Using Cauchy estimates for the Fourier coefficients
$|\hat \phi_{k,m} |  \le  \exp( -2 \pi \rho ( |k| + |m|)) ||\phi||_\rho$
and that
$| e^{2 \pi k \cdot \Omega} -1|^{-1} \le C \dist(k \cdot \Omega, \integer)^{-1} $
and the assumption~\eqref{subexponential}, we obtain
that
\[
\begin{split}
||v||_{\rho'}
& \le C \sum_{k\ne 0, m} \exp( -2 \pi \rho ( |k| + |m|)) ||\phi||_\rho
\dist(k \cdot \Omega, \integer)^{-1}
 ||e^{2 \pi i (k\cdot \psi + m \eta)} ||_{\rho'} \\
& \le C ||\phi||_\rho\sum_{k\ne 0, m} \exp( -2 \pi \rho ( |k| + |m|))
{\rm dist}(k \cdot \Omega, \integer)^{-1}
\exp( 2 \pi \rho' ( |k| + |m|))\\
& \le C(\rho,\rho')||\phi||_{\rho}.
\end{split}
\]
\end{proof}

\section{Lindstedt series for quasi-periodic solutions with resonant frequencies}
\label{Lindstedt}

The goal of this section is to study \eqref{equilibrium resonant hull2}
perturbatively when the non-linear term  is small. Hence, we will
write \eqref{equilibrium resonant hull2} with a small parameter $\eps$
\begin{equation}\label{equilibrium revised}
\begin{split}
v(\psi +\Omega,\eta) + v(\psi -\Omega,\eta) -
2 v(\psi,\eta) + \epsilon  W((\psi, \eta)  + \beta v(\psi,\eta)) + \lambda(\eta)&=0 .\\
\end{split}
\end{equation}

We will find $v(\psi,\eta), \lambda(\eta)$
solving  \eqref{equilibrium revised} and \eqref{normalization} in the sense of
formal power
series in $\epsilon$.
In this paper, we will not consider the problem of whether these series converge or represent a function. This will be studied in more details in \cite{ZhangSL15}.

Since one may want to find solutions correspond to $\lambda=0$ (or $\lambda=\lambda^*$ with $|\lambda^*|$ small), it is important for us to keep track of
$\frac{\partial \lambda}{\partial \eta}(\eta,\epsilon)$ in order to solve $\lambda(\eta,\epsilon)=0$ by implicit function theorem.

 Following the standard perturbative procedure we will write
 \begin{equation}
\label{seriesexpansion}
\begin{split}
  v & = \sum_{n= 0}^\infty \epsilon^n v^n,\\
\lambda &= \sum_{n= 0}^\infty \epsilon^n \lambda^n.
\end{split}
\end{equation}

Here $v^n$ and $\lambda^n$ are coefficients of $\epsilon^n$, not powers of $v$ or $\lambda$. Substitute \eqref{seriesexpansion} in \eqref{equilibrium revised}
and equate powers of $\epsilon$.

Of course, carrying out this procedure for $n\leq N$ will require
that $\Omega$ satisfies some Diophantine properties as well as some differentiability
assumptions.

Equating the coefficients of $\epsilon^0$ in \eqref{equilibrium revised}
  we obtain
  \begin{equation}
 \begin{split}
  v ^ 0(\psi+ \Omega,\eta) + v^0(\psi - \Omega,\eta) -
 2v^0(\psi,\eta) + \lambda^0(\eta) &=0.\\
\end{split}
\end{equation}

 Hence, if $\Omega$ satisfies the condition \eqref{subexponential}  we see that $v^0$ is constant,
  $\lambda^0 = 0$ and imposing the normalization \eqref{normalization} we obtain
  $v^0 = 0$.

 Matching coefficients of $\epsilon^1$ in both sides of  \eqref{equilibrium revised}
 we obtain
  \begin{equation}\label{order1}
\begin{split}
  v^1(\psi+\Omega,\eta) + v^1(\psi-\Omega,\eta) -
 2v^1(\psi,\eta) + W (\psi, \eta)    + \lambda^1(\eta) &= 0.
 \end{split}
  \end{equation}
We see that, using the theory in Section ~\ref{cohomology}, to have analytic
 $v^1$   solving \eqref{order1}, it is necessary and sufficient to
  have
 \begin{equation}\label{lambda1}
 \begin{split}
 \lambda^1(\eta) &= -\int_{\mathbb{T}^{d-1}} W (\psi, \eta) d\psi.
 \end{split}
  \end{equation}

 Then, $v^1$, $\lambda^1$  can be determined uniquely up to a constant from
\eqref{order1}.  In fact, in Fourier series, the equation for $v^1, \lambda^1 $
is
\begin{equation}
 \begin{split}
v_k ^1 2 (\cos (2\pi k \Omega) -1) & = - W_k - \delta_{0, k} \lambda^1,\\
\end{split}
 \end{equation}
where $\delta_{0,k}$ is the Kronecker delta.
 In particular, the constant in $v^1$ is determined by the normalization \eqref{normalization}.

Proceeding to higher order follows the same pattern. We see that
 matching the terms of order $\epsilon^n$ in \eqref{equilibrium revised} we obtain
  \begin{equation}\label{ordern}
\begin{split}
  v^n (\psi +\Omega,\eta) + v^n (\psi -\Omega,\eta) -
 2v^n (\psi,\eta) + R^n(\psi,\eta) + \lambda^n(\eta) &=0,\\
\end{split}
\end{equation}where $R^n$ is a polynomial expression in $v^1,\ldots,
  v^{n-1}$ with coefficients which are derivatives with respect
  to $\psi$ of $W((\psi, \eta)  + \beta v(\psi,\eta))$.
This polynomial can be computed
explicitly because it is given by
\begin{equation}\label{R^n}
R^n = \frac{1}{(n-1)!}\frac{d^{n-1}}{d\epsilon^{n-1}} W
\bigg((\psi,\eta) + \beta \sum_{j=0}^{n-1} v^j
(\psi,\eta)\bigg)\bigg|_{\epsilon=0}
\end{equation} and these are well known formulae. We also note that,
from the algorithmic point of view there are efficient ways to
compute $R^N$ using methods of ``automatic
differentiation'' \cite{Harodifferentiation, BCHNN06}.

Since $R^n$ can be computed explicitly, \eqref{ordern} can be solved the same way as \eqref{order1}.

We have therefore established
\begin{theorem}\label{LindstedtN}
Assume that $\Omega$ satisfies \eqref{subexponential}  and that $W:D_{\rho} \rightarrow \mathbb{C}$
 is an analytic function. We can find formal power series solutions in $\epsilon$ of the form
\eqref{seriesexpansion} solving the equation \eqref{equilibrium revised}. For any $0<\delta<\rho$, each of the terms $v^n(\psi,\eta)$ is analytic in $D_{\rho-\delta}$. If $W$ takes real values for real values,
 then so do $v^n$, and $\lambda$ is real.
\end{theorem}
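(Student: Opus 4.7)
The plan is to prove Theorem \ref{LindstedtN} by induction on the order $n$, with Lemma \ref{estimate lemma for cohomology equation} as the workhorse at each step. The cases $n=0$ and $n=1$ are already done in the paragraphs preceding the statement: at order $0$ the normalization \eqref{normalization} forces $v^{0}=0$ and $\lambda^{0}=0$; at order $1$, the cohomology equation \eqref{order1} is solvable by choosing $\lambda^{1}$ as the negative $\psi$-average \eqref{lambda1} of $W$, and Lemma \ref{estimate lemma for cohomology equation} then yields $v^{1}$, analytic on $D_{\rho-\delta}$ for any $0<\delta<\rho$ and normalized by \eqref{normalization}.

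For the inductive step, suppose $v^{1},\ldots,v^{n-1}$ and $\lambda^{1},\ldots,\lambda^{n-1}$ have been constructed, each $v^{j}$ analytic on $D_{\rho-\delta}$ for every $0<\delta<\rho$. The first task is to identify the structure of $R^{n}$ defined by \eqref{R^n}. Expanding $W((\psi,\eta)+\beta v)$ by multivariate Taylor series around $(\psi,\eta)$, using $v^{0}=0$ and writing $v=\sum_{k\ge 1}\epsilon^{k}v^{k}$, and then collecting the coefficient of $\epsilon^{n-1}$, one sees that $R^{n}$ is a finite polynomial expression in $v^{1},\ldots,v^{n-1}$ whose coefficients are of the form $(\alpha!)^{-1}\beta^{\alpha}\,\partial^{\alpha}W(\psi,\eta)$ with $1\le|\alpha|\le n-1$. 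Since $W$ is analytic on $D_{\rho}$, its partial derivatives are analytic on any $D_{\rho'}$ with $\rho'<\rho$ by Cauchy estimates, and finite products and sums of functions analytic on $D_{\rho-\delta}$ remain analytic there; hence $R^{n}$ is analytic on $D_{\rho-\delta}$ for every $0<\delta<\rho$.

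With $R^{n}$ in hand, \eqref{ordern} is precisely a cohomology equation of the type treated in Lemma \ref{estimate lemma for cohomology equation}. The zero-$\psi$-average hypothesis \eqref{normalization phi} forces the unique choice
\begin{equation*}
\lambda^{n}(\eta)=-\int_{\mathbb{T}^{d-1}} R^{n}(\psi,\eta)\,d\psi,
\end{equation*}
which is analytic in $\eta$ because $R^{n}$ is. Lemma \ref{estimate lemma for cohomology equation} then produces a solution $v^{n}$ on a slightly smaller strip, unique up to an additive function of $\eta$, which the normalization \eqref{normalization} pins down. Reality is preserved automatically at each step: $R^{n}$ is a polynomial in real-valued quantities with real-valued coefficients, and Lemma \ref{estimate lemma for cohomology equation} preserves reality.

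The only genuinely delicate point will be the bookkeeping of nested analyticity strips. Given a fixed order $n$ and a prescribed $\delta>0$, only $n$ applications of Lemma \ref{estimate lemma for cohomology equation} are performed in total, so I will split $\delta$ into $n$ pieces $\delta/n$ and carry out each inductive step on a strip shrunk by at most $\delta/n$, arriving at a $v^{n}$ analytic on $D_{\rho-\delta}$. In contrast with KAM-type arguments, no uniform control of this accumulated loss \emph{in $n$} is required, since the theorem only asserts termwise analyticity of a formal power series; the quantitative question of whether $\|v^{n}\|_{\rho-\delta}$ grows in a controlled fashion, and thus whether the Lindstedt series converges, is precisely the subject deferred to \cite{ZhangSL15}.
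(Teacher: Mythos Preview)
Your proof is correct and follows essentially the same inductive approach as the paper, which establishes the theorem via the discussion immediately preceding its statement. One minor imprecision worth fixing: equation \eqref{ordern} is a \emph{second-order} difference equation $v_{+}+v_{-}-2v=-(R^{n}+\lambda^{n})$, not literally the first-order cohomology equation of Lemma~\ref{estimate lemma for cohomology equation}; you should either factor it as $(T_{\Omega}-1)(T_{-\Omega}-1)v=R^{n}+\lambda^{n}$ and invoke the lemma twice (each application costing one strip shrinkage, which your $\delta/n$ budget easily absorbs), or argue directly in Fourier coefficients with the multiplier $2(\cos(2\pi k\cdot\Omega)-1)$ as the paper does at order one.
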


\subsection{The auxiliary equation}
Now, we turn to the problem of studying the equation
\begin{equation}\label{auxiliary}
\lambda(\eta, \epsilon) = \lambda^*.
\end{equation}
We expect to obtain a solution $\eta^\ast(\epsilon)$ provided that \eqref{auxiliary} satisfies some non-degeneracy conditions.

Having solution of \eqref{auxiliary} to order 1 in $\epsilon$, amounts to
\[
\lambda^1(\eta) =0.
\]
That is, we need to find $\eta$ such that
\begin{equation}\label{auxiliaryfirst}
\int_{\mathbb{T}^{d-1}} W((\psi, \eta)) d\psi =0.
\end{equation}

\begin{theorem}
\label{prop:twosolutions}
The equation \eqref{auxiliaryfirst} has always two solutions.
\end{theorem}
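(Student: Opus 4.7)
The plan is to reduce the statement to the observation that the function $f(\eta) := \int_{\mathbb{T}^{d-1}} W((\psi,\eta))\, d\psi$ is a continuous, real-valued, $1$-periodic function of $\eta$ whose mean over $\mathbb{T}^1$ vanishes. Once this is established, the intermediate value theorem on the circle forces either $f \equiv 0$ (in which case every $\eta$ is a solution) or at least two sign changes, hence at least two zeros of $f$.

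The key computation is to compute $\int_0^1 f(\eta)\, d\eta$ and show it is zero. By Fubini, this equals $\int_{\mathbb{T}^d} W(y)\, dy$. Recall from Section~\ref{resonanthull} that $W$ was defined via $\partial_\alpha V(\theta) = W(B\theta)$ with $B \in SL(d,\mathbb{Z})$. Since $B$ has integer entries and determinant $\pm 1$, the map $\theta \mapsto B\theta$ is a measure-preserving diffeomorphism of $\mathbb{T}^d$, so the change of variables $y = B\theta$ gives
\begin{equation*}
\int_{\mathbb{T}^d} W(y)\, dy = \int_{\mathbb{T}^d} W(B\theta)\, d\theta = \int_{\mathbb{T}^d} \partial_\alpha V(\theta)\, d\theta.
\end{equation*}
Expanding $\partial_\alpha V = \sum_i \alpha_i \partial_{\theta_i} V$ and using that the integral over $\mathbb{T}^d$ of any partial derivative of a smooth periodic function vanishes, I conclude $\int_0^1 f(\eta)\, d\eta = 0$.

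With the zero-mean property in hand, I would finish as follows. If $f$ vanishes identically, there is nothing to prove (there are infinitely many solutions, in particular two). Otherwise, because $f$ is continuous and has integral zero on the circle, it must attain strictly positive and strictly negative values. Traversing the circle $\mathbb{T}^1$ once, a continuous function that takes both signs must cross zero an even number of times, and in particular at least twice. This yields two distinct solutions of \eqref{auxiliaryfirst}.

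The argument is essentially soft; the only step that requires care is verifying the change of variables and the zero-average property, so the potential obstacle is making sure that the definition $W \circ B = \partial_\alpha V$ is consistent with treating $W$ as an integrable periodic function on $\mathbb{T}^d$ and that $B$ genuinely acts as a measure-preserving automorphism of the torus (which follows from $B \in SL(d,\mathbb{Z})$). No Diophantine hypothesis on $\Omega$ is needed for this particular statement.
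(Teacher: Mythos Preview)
Your proof is correct and follows essentially the same approach as the paper: show that $f(\eta)=\int_{\mathbb{T}^{d-1}} W(\psi,\eta)\,d\psi$ is a continuous periodic function with zero mean (using the change of variables by $B\in SL(d,\mathbb{Z})$ and the fact that $\partial_\alpha V$ integrates to zero over $\mathbb{T}^d$), and then conclude that such a function has at least two zeros. You spell out the measure-preserving property of $B$ and the intermediate-value argument more explicitly than the paper does, but the strategy is identical.
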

\begin{proof}
Since
\[
\int_{\mathbb{T}^{d-1}} W((\psi, \eta)) d\psi =
\int_{\mathbb{T}^{d-1}} (\partial_\alpha V)\big(B^{-1}(\psi,\eta) \big)
d\psi,
\]if we integrate again with respect to $\eta$ we obtain
\begin{equation}
\int_{\mathbb{T}}\int_{\mathbb{T}^{d-1}} W((\psi, \eta) d\psi\ d\eta = \int_{\mathbb{T}^{d}} (\partial_\alpha V)\big(B^{-1}(\psi,\eta)\big)
d\psi\ d\eta = 0.
\end{equation}

Hence the function of $\eta$ given by $\int_{\mathbb{T}^{d-1}}
W((\psi, \eta) d\psi$ is a continuous periodic function of $\eta$
with zero average. Therefore, it has at least two zeros. We also note that there are open sets of perturbations where there are 4,6,$\cdots$ zeros.
\end{proof}

Denote one of these solutions of \eqref{auxiliaryfirst} as
$\eta^\ast$.

A sufficient condition that ensures that we can solve the equation
\eqref{auxiliary} to all orders is that
\begin{equation}\label{nondeg}
\frac{\partial}{\partial \eta} \lambda^1(\eta, \epsilon)
\left|_{\eta=\eta^\ast,\epsilon=0}\right. \neq 0.
\end{equation}

More explicitly,
\begin{equation}\label{nondegeneracy}
\int_{\mathbb{T}^d} \frac{\partial}{\partial \eta} (\partial_\alpha V)\big(B^{-1}(\psi,\eta)\big)
d\psi\ d\eta \neq 0.
\end{equation}
Then, the implicit function theorem for power series
\cite{Cartan95,Dieudonne71} gives us that we can indeed find $\eta^\ast(\epsilon)$.

Similarly, we can solve the equation $\lambda(\eta) = \lambda^*$ provided that $|\lambda^*|$ is
sufficiently small.

Therefore, we have established
\begin{theorem}\label{secondLindstedt}
Assume that $\Omega\in \mathscr{D}(\nu,\tau)$ as defined in
\eqref{subexponential} , that $W$ is an analytic function,
and that \eqref{nondegeneracy} holds,
 we can find formal power series $\eta_\epsilon$ in $\epsilon$
so that $v_{\eta_\epsilon}$ is the solution of \eqref{equilibrium revised}.
\end{theorem}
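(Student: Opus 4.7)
The plan is to combine Theorem \ref{LindstedtN} with the implicit function theorem for formal power series in order to eliminate $\eta$ in favor of $\epsilon$. Theorem \ref{LindstedtN} already furnishes formal series
\[
\lambda(\eta,\epsilon)=\sum_{n\ge 0}\epsilon^n\lambda^n(\eta),\qquad v(\psi,\eta,\epsilon)=\sum_{n\ge 0}\epsilon^n v^n(\psi,\eta),
\]
with each $\lambda^n(\eta)$ real-analytic on $\mathbb{T}^1$ and, from the order-zero calculation, $\lambda^0\equiv 0$. The goal is to choose a formal power series $\eta_\epsilon = \eta^\ast + \sum_{n\ge 1}\epsilon^n\eta_n^\ast$ solving $\lambda(\eta_\epsilon,\epsilon)=\lambda^\ast$ identically in $\epsilon$; then $v_{\eta_\epsilon}(\psi,\epsilon):=v(\psi,\eta_\epsilon,\epsilon)$ will be the desired formal solution of \eqref{equilibrium revised}.

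First I would locate the base point $\eta^\ast$. For $\lambda^\ast=0$, matching the coefficient of $\epsilon^1$ requires $\lambda^1(\eta^\ast)=0$, and Theorem \ref{prop:twosolutions} supplies at least two such points; for $|\lambda^\ast|$ small, a nearby root persists at each simple zero of $\lambda^1$. The hypothesis \eqref{nondegeneracy}, equivalently $\partial_\eta\lambda^1(\eta^\ast)\ne 0$, ensures that $\eta^\ast$ is indeed a simple zero. Next I would substitute the ansatz for $\eta_\epsilon$ into $\lambda(\eta_\epsilon,\epsilon)=\lambda^\ast$ and Taylor-expand each $\lambda^j(\eta_\epsilon)$ around $\eta^\ast$. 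Collecting the coefficient of $\epsilon^{n+1}$ yields an equation of the form
\[
\partial_\eta\lambda^1(\eta^\ast)\,\eta_n^\ast + \Phi_n(\eta_1^\ast,\dots,\eta_{n-1}^\ast) = 0,
\]
where $\Phi_n$ is a universal polynomial in the previously determined $\eta_k^\ast$ whose coefficients are derivatives of $\lambda^1,\dots,\lambda^{n+1}$ evaluated at $\eta^\ast$. The non-degeneracy condition makes this triangular system uniquely solvable, determining all $\eta_n^\ast$ by recursion. Substituting the resulting $\eta_\epsilon$ into $v(\psi,\eta,\epsilon)$ and re-expanding in $\epsilon$ produces the claimed $v_{\eta_\epsilon}$. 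This entire argument is the content of the implicit function theorem for formal power series \cite{Cartan95,Dieudonne71} applied to $\lambda(\eta,\epsilon)-\lambda^\ast$ at $(\eta^\ast,0)$.

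The main difficulty is not analytic: all small-divisor work has already been absorbed in Theorem \ref{LindstedtN} via Lemma \ref{estimate lemma for cohomology equation}, and there are no convergence issues to control at the level of formal series. What remains is the bookkeeping needed to check that the substitution of one formal power series into another is well defined at each order (which holds because $\eta_\epsilon$ has constant term $\eta^\ast$ and each $\lambda^n$ is smooth in $\eta$) and that the resulting recursion has non-vanishing diagonal coefficient (which is exactly \eqref{nondegeneracy}). Both are immediate, so the proof essentially consists of verifying the two hypotheses of the formal implicit function theorem and invoking it.
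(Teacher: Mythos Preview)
Your proposal is correct and follows essentially the same approach as the paper: invoke Theorem~\ref{LindstedtN} to obtain the formal series $\lambda(\eta,\epsilon)$, locate $\eta^\ast$ as a zero of $\lambda^1$ via Theorem~\ref{prop:twosolutions}, and then apply the implicit function theorem for formal power series using the non-degeneracy hypothesis \eqref{nondegeneracy}. Your write-up in fact spells out the triangular recursion for the $\eta_n^\ast$ in more detail than the paper itself does.
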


 Clearly, since the function $\lambda^n(\eta)$ are bounded, if
$\lambda^*$ -- the physical force -- is large  enough, there is no solution.
This has a clear physical meaning. If we increase the external force but keep it small,
the system can react by changing the transversal phase. If the force increases beyond
a threshold, the system cannot react by adapting the phase. Hence, the equilibrium breaks
down.  In this paper, we are not considering the dynamics of
the model, only the equilibria (our models
for the energy include only the potential energy of
the configuration and not any kinetic energy).
One can, however, expect that, if there was some dynamics,
the equilibria considered here could slide.

Of course, the sufficient condition  \eqref{nondegeneracy} is
far from being necessary and there are many other conditions that
are enough.

\begin{prop}\label{secondLindstedtdegenerate}
Assume that $\Omega$ satisfies \eqref{subexponential} , that $W((\psi, \eta)  + \beta v(\psi,\eta)) $ is an analytic function,
and that \eqref{nondegeneracy} holds.

Assume that $\eta^*$ is such that for some $m \in \mathbb{N}$
we have
\begin{equation}
\begin{split}
& \lambda^i(\eta^*) = 0, \quad i = 1,\ldots 2 m\\
& \lambda^{2 m +1} (\eta^*) \ne 0.
\end{split}
\end{equation}

Then, we can find formal power series $\eta_\epsilon$ in $\epsilon$
so that $v(\psi,\eta_\epsilon)$ is the solution of \eqref{equilibrium revised}.
\end{prop}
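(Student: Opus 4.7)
The plan is to construct $\eta_\epsilon = \eta^* + \sum_{k \ge 1} \eta_k \epsilon^k$ as a formal power series solving $\lambda(\eta_\epsilon,\epsilon) = 0$, mimicking the inductive construction of Theorem~\ref{secondLindstedt} but tracking the higher-order vanishing of $\lambda(\eta^*,\cdot)$ in $\epsilon$. Substituting the ansatz into $\lambda(\eta,\epsilon) = \sum_{i\ge 1}\epsilon^i \lambda^i(\eta)$ and Taylor-expanding each $\lambda^i$ around $\eta^*$, the coefficient of $\epsilon^N$ takes the form
\begin{equation*}
\lambda^N(\eta^*) + \partial_\eta \lambda^1(\eta^*)\,\eta_{N-1} + \mathcal{R}_N(\eta_1,\ldots,\eta_{N-2}) = 0,
\end{equation*}
where $\mathcal{R}_N$ collects monomials $\tfrac{1}{j!}\partial_\eta^j \lambda^i(\eta^*)\,\eta_{k_1}\cdots\eta_{k_j}$ with $i+k_1+\cdots+k_j = N$, $k_\ell\ge 1$, $i\ge 1$, and $(i,j)\notin\{(N,0),(1,1)\}$. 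Since $\partial_\eta \lambda^1(\eta^*)\ne 0$ by \eqref{nondegeneracy}, this relation uniquely determines $\eta_{N-1}$ once $\eta_1,\ldots,\eta_{N-2}$ are known.

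Next I would run the induction, exploiting the vanishing of $\lambda^i(\eta^*)$ for $i\le 2m$. Assume inductively that $\eta_1=\cdots=\eta_{N-2}=0$. Any nonzero monomial in $\mathcal{R}_N$ would require every factor $\eta_{k_\ell}$ to be nonvanishing, forcing $k_\ell\ge N-1$; together with $\sum_\ell k_\ell = N-i\le N-1$ this forces $j=1$, $k_1 = N-1$, $i = 1$, which is precisely the excluded case $(i,j)=(1,1)$ already absorbed into $\partial_\eta \lambda^1(\eta^*)\eta_{N-1}$. Hence $\mathcal{R}_N = 0$. Therefore, for $2\le N\le 2m$ we have both $\lambda^N(\eta^*)=0$ and $\mathcal{R}_N=0$, forcing $\eta_{N-1}=0$ and closing the induction; at $N=2m+1$, $\mathcal{R}_{2m+1}=0$ still holds, but $\lambda^{2m+1}(\eta^*)\ne 0$ supplies the first nontrivial coefficient $\eta_{2m} = -\lambda^{2m+1}(\eta^*)/\partial_\eta\lambda^1(\eta^*)$.

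For $N>2m+1$ the iteration proceeds exactly as in Theorem~\ref{secondLindstedt}: $\eta_{N-1}$ is uniquely solved from its predecessors via the invertibility of $\partial_\eta \lambda^1(\eta^*)$. Substituting the resulting $\eta_\epsilon$ into the Lindstedt series $v=\sum_n \epsilon^n v^n(\psi,\eta)$ from Theorem~\ref{LindstedtN} yields the desired formal power series solution of \eqref{equilibrium revised}. The main obstacle I anticipate is the combinatorial bookkeeping needed to verify that $\mathcal{R}_N$ vanishes identically through $N=2m+1$ under the inductive hypothesis; once this is in hand, the rest is the standard implicit-function-theorem-for-formal-power-series argument inherited from Theorem~\ref{secondLindstedt}.
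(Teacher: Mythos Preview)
Your proof is correct and follows the same approach as the paper, whose entire proof is the one-line remark that the implicit function theorem for formal power series applies. Your explicit recursion is precisely that argument written out; the additional tracking showing $\eta_1=\cdots=\eta_{2m-1}=0$ and $\eta_{2m}\ne 0$ is correct but not actually needed for the stated conclusion, since the recursion $\partial_\eta\lambda^1(\eta^*)\,\eta_{N-1}=-\lambda^N(\eta^*)-\mathcal{R}_N$ already determines every $\eta_{N-1}$ from its predecessors regardless of whether $\mathcal{R}_N$ happens to vanish.
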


The proof is again an application of the implicit function theorem for power series.

\section{A dynamical interpretation of the equilibrium equations of Frenkel-Kontorova models}
\label{sec:dynamical}

In this section, we present a dynamical interpretation of the
equilibrium equations \eqref{equilibrium} in Frenkel-Kontorova models.

Even if the dynamical interpretation is  possible for finite
range interactions, we see that adding another small interaction of
longer range is a singular perturbation (even the dimension of the phase space changes). Whereas, for the methods in
this paper, adding a small term in the longer range is a regular perturbation of the same order.

A straightforward way of transforming the equilibrium equation
\begin{equation}
x_{n+1} + x_{n-1} - 2 x_n + \epsilon\partial_\alpha V(x_n\alpha) + \lambda = 0 \quad
\forall~ n\in \mathbb{Z}
\end{equation}
into a
dynamical system is setting
\begin{equation}\label{evolution}
\begin{split}
y_n&=(x_n, x_{n-1})\\
y_{n+1}&= (2y_n^1- y_n^2 - \epsilon\partial_\alpha V(\alpha y_n^1)-\lambda, y_n^1).
\end{split}
\end{equation}

However, \eqref{evolution} is not very useful because we have to
consider it as a map of $\mathbb{R}^2$ and the term $\partial_\alpha V
(\alpha y_n ^1)$ does not make apparent that it is periodic in $\alpha
y_n^1$.

A more natural formulation is obtained by observing that the equation
\eqref{equilibrium} is equivalent to the system on $\mathbb{T}^d\times
\mathbb{R}$
\begin{equation}\label{standard}
\begin{split}
p_{n+1} &= p_n - \epsilon\partial_\alpha V(q_n) - \lambda\\
q_{n+1} &= q_n + \alpha p_{n+1},
\end{split}
\end{equation}where $q_n\in\mathbb{T}^d$, $p_n\in \mathbb{R}$.
(Just multiply \eqref{equilibrium} by $\alpha$ and use the substitution $p_n=x_n-x_{n-1}$, $q_n=\alpha x_n$. Note that \eqref{equilibrium} is equivalent to
\[
(x_{n+1} - x_n) - (x_n - x_{n-1}) + \epsilon\partial_\alpha V(\alpha x_n) + \lambda =0
\]
hence, we obtain the first equation.)

We will write the mapping~\eqref{standard} as
\begin{equation}\label{standardmap}
(p_{n+1}, q_{n+1}) = F_{\eps,\lambda}( p_n, q_n).
\end{equation}

Note that \eqref{standard} is
typographically  very similar to the standard map \cite{Chirikov}
or to analogues introduced for volume preserving maps.
Nevertheless, there are significant differences
(besides the different dimensions).

A very crucial  difference between \eqref{standardmap}
and the generic volume preserving maps is that
$q_{n+1} - q_n$ is always a multiple of $\alpha$ (see \eqref{standard}).
So that the two dimensional leaves
\begin{equation} \label{constraints}
\mathcal{M}_{q_0} = \{ (p, q_o  + \alpha t) ~ |~ p, t \in \mathbb{R}\}
\end{equation}
are preserved.
Note that each of the leaves $\mathcal{M}_{q_0}$ is dense in the $d+1$
dimensional phase space.

The mapping \eqref{standard} clearly preserves the volume form
$dp\wedge dq_1 \wedge \ldots \wedge q_d$ since it is the composition of
\begin{equation}\label{map1}
\begin{split}
p_{n+1} &= p_n - \epsilon\partial_\alpha V(q_n) - \lambda\\
q_{n+1} &= q_n
\end{split}
\end{equation}
and
\begin{equation}\label{map2}
\begin{split}
p_{n+1} &= p_n \\
q_{n+1} &= q_n + \alpha p_{n+1}.
\end{split}
\end{equation}

We recall that, in our context,
a volume preserving map
is exact when
$F^* (p dq_1 \wedge  d q_2 \wedge \ldots \wedge d q_d) =
p dq_1 \wedge  d q_2 \wedge \ldots \wedge d q_d  + d P
$ where $P$ is $d-1$ form.

Indeed, \eqref{standardmap} is an exact volume preserving map  if and only if
$\lambda = 0$, since it is easy to observe that, when $\lambda = 0$,   both \eqref{map1}
 and \eqref{map2} are exact.

When $\eps = 0$, $\lambda = 0$, the map \eqref{standard}
is integrable. The codimension-one tori given by $p = \text{cte.}$
are invariant and the motion in them is a rotation.

The volume preserving KAM theory leads us to expect that for $\epsilon\ll 1,\lambda=0$, the tori in which the frequency of the motion is Diophantine survive. We also expect that the tori with resonance, breaks down into lower dimensional tori. The lower dimensional tori can survive for $|\lambda|\ll 1$ (depending on $\epsilon$).

In this paper we have quantitative (but formal and non-rigorous) prediction of these phenomenon based on perturbative expansions. We hope that some of them may be either verified by rigorous results or explored numerically.

\subsection{On the global geometry of the constraints given
by \eqref{constraints} }

Integrable systems with constraints have been studied extensively
in geometric mechanics. Nevertheless, the systems we consider here
have some unusual properties that we would like to highlight.

It is customary to classify the constraints in holonomic when the
distributions are integrable (in the sense that they foliate the phase
space with a smooth quotient) and non-holonomic when the distributions
are not integrable  and they violate the
hypothesis of Frobenius Theorem \cite{Souriau,Audin,Holm1}.

The constraints \eqref{constraints} escape this dichotomy.
They are locally integrable (they do satisfy the hypothesis of
Frobenius Theorem and are locally given by invariant manifolds that give rise
to a foliation) but nevertheless, the manifolds  are dense, so that
they do not give
a nice quotient manifold.

Hence, even if we have holonomic constraints locally (and the infinitesimal results
about holonomic systems are applicable), some global aspects such as symplectic reduction \cite{Meyer73,
 MarsdenW74,MarsdenW01} cannot be applied to \eqref{standard}.

\subsection{Lyapunov exponents and phonon localization}\label{sec:phonon}
In this section we study the so called \emph{phonon gap} around the equilibria of \eqref{equilibrium resonant hull}
given by a hull function.

Let us start by recalling some standard definitions.
The main idea is that sound waves are defined by the propagation of
infinitesimal disturbances around an equilibrium equation.

If we linearize around an equilibrium solution $x=\{x_n\}_{n\in\mathbb{Z}}$, we
obtain the dynamics of the infinitesimal perturbations
$\xi_n$ is given by
\begin{equation} \label{evolutionlinear}
\ddot \xi_n = \xi_{n+1} + \xi_{n-1} - 2 \xi_n  + (\partial_\alpha)^2 V(\alpha x_n)
\xi_n  \equiv (\mathcal{L}_{x}  \xi)_n.
\end{equation}

It is clear that the propagation properties of sound waves will
be affected by the spectral properties of the operator $\mathcal{L}_x$.

Note that the operator $\mathcal{L}$ is a one-dimensional
Schr\"odinger operator with a position dependent potential.
The dependence will be given by the dynamics of the $x_n$. In particular, for
the solutions given by a hull function, we will be considering quasi-periodic
potentials.

The mathematical theory of the spectrum of
quasi-periodic Schr\"odinger operators is well developed
\cite{PasturF92,HaroL}. In particular, it is known that the spectrum is
independent of the $\ell^p$ space in which it is considered,
and, more important for us, that the spectrum can be characterized
by the existence of approximate eigenfunctions. In the
dynamical interpretation in  this section,
the spectrum corresponds to the Lyapunov exponents of the
solution \cite{AubryKB92}.

In the case of \eqref{standard}, we can study the Lyapunov spectra for any orbit using the
geometric constraints \eqref{constraints}.

\begin{prop}
Let $(p_n,q_n)$ be an orbit of the mapping given by \eqref{standard}. Assume that it is an orbit in the full measure set that Osledets
Theorem applies.  Then, $d-1$ Lyapunov exponents are
zero. Also, the sum of all the Lyapunov exponents is zero.
\end{prop}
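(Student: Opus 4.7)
The plan is to combine two ingredients. First, the map $F_{\eps,\lambda}$ preserves the ambient $(d+1)$-dimensional volume, which will immediately give the vanishing of the sum of all exponents. Second, the geometric constraint that $q_{n+1}-q_n$ lies in $\mathbb{R}\alpha$ forces the derivative cocycle $DF$ to act as the identity on a $(d-1)$-dimensional quotient of the tangent bundle, which will furnish the $d-1$ zero exponents.

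For the sum, I would use the decomposition of $F_{\eps,\lambda}$ as the composition of the two shears \eqref{map1} and \eqref{map2} already noted in the paper. Each is obviously volume-preserving for $dp\wedge dq_1\wedge\cdots\wedge dq_d$, so $|\det DF|\equiv 1$. Oseledets theorem then gives
\[
\sum_i \chi_i(x) = \lim_{n\to\infty}\frac{1}{n}\log|\det DF^n(x)| = 0.
\]

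For the $d-1$ zero exponents, I would introduce the fiberwise linear map $\tilde\pi:T(\mathbb{T}^d\times\mathbb{R})\to\alpha^\perp$ sending a tangent vector $(\delta p,\delta q)$ to the orthogonal projection of $\delta q$ onto the hyperplane perpendicular to $\alpha$. Its kernel is the $2$-dimensional subspace $E=\mathrm{span}\{(1,0),(0,\alpha)\}$, which is precisely the tangent space to the invariant leaves $\mathcal{M}_{q_0}$ defined in \eqref{constraints}. Differentiating \eqref{standard} yields $\delta q_{n+1}-\delta q_n=\alpha\,\delta p_{n+1}$, a scalar multiple of $\alpha$, so $\tilde\pi\circ DF_x=\tilde\pi$ at every point. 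This identifies $E$ as an invariant subbundle of rank $2$ and shows that the induced cocycle on the rank-$(d-1)$ quotient bundle $T/E\cong\alpha^\perp$ is literally the identity. Consequently the $d-1$ Lyapunov exponents of the quotient cocycle all vanish, which yields $d-1$ zero exponents in the full spectrum; the remaining two exponents live on $E$ and must sum to zero by the first step.

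The only piece I expect to require care is the passage from the identity of the quotient cocycle to a clean splitting of the full Oseledets spectrum into the two exponents on $E$ plus $d-1$ zeros from $T/E$. This is the standard behavior of Oseledets filtration with respect to a continuous invariant subbundle of a measure-preserving cocycle, and once it is in place both conclusions of the proposition follow directly from what has already been assembled. If one prefers to avoid this abstract invocation, the same conclusion can be reached by noting that $\tilde\pi\circ DF^{-1}=\tilde\pi$ as well (from the explicit inverse $F^{-1}(p,q)=(p+\eps\partial_\alpha V(q-\alpha p)+\lambda,q-\alpha p)$), so for any $v$ with $\tilde\pi(v)\neq 0$ the norms $\|DF^{\pm n}v\|$ are bounded below by $|\tilde\pi(v)|/\|\tilde\pi\|$, forcing $\chi(v)=0$ and thereby confining all nonzero exponents to the $2$-dimensional kernel $E$.
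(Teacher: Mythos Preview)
Your proof is correct and follows essentially the same approach as the paper: both exploit that the foliation by the leaves $\mathcal{M}_{q_0}$ is preserved and that the map acts trivially in the transverse ($\alpha^\perp$) directions, together with volume preservation for the vanishing sum. Your formulation via the projection $\tilde\pi$ and the quotient cocycle is a more detailed infinitesimal version of the paper's observation $\tilde F(\tilde{\mathcal{M}}_{q_0+s}) = \tilde{\mathcal{M}}_{q_0}+s$; the only minor looseness is in the alternative argument at the end, where the conclusion ``$\chi(v)=0$ for any $v$ with $\tilde\pi(v)\ne 0$'' should really be phrased as ``every Oseledets subspace with nonzero exponent lies in $\ker\tilde\pi$,'' but your primary argument via the invariant subbundle $E$ already handles this cleanly.
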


\begin{proof}
Consider $\tilde F$, the lift of the map $F$ in \eqref{standardmap}.

Let $s$ be a vector perpendicular to $\alpha$. It is a simple computation
to show that:
\[
\tilde F( \tilde M_{q_0 + s} ) = \tilde M_{q_0} + s.
\]

Then it is clear that the $d-1$ vectors in the directions perpendicular to $s$ do
not grow.

The fact that the sum of the Lyapunov exponents for orbits of
a volume preserving map is
zero is  well known since the sums of the Lyapunov exponents is
the rate of growth of the determinant of iterates of the map.
\end{proof}

Of course, the dynamical system \eqref{standard} is straightforward to implement numerically and allows study of statistical properties of depinning.

\section*{Acknowledgements}
We thank Dr. T. Blass for discussions.
R. L. and L. Z.  have  been supported by DMS-1500943.
The hospitality of
JLU-GT Joint institute for Theoretical Sciences for the three authors
was instrumental in finishing the work.
 R.L also acknowledges the hospitality of
the Chinese Acad. of Sciences.
X. Su is supported by both National
Natural Science Foundation of China (Grant No. 11301513) and ``the Fundamental Research Funds for the Central Universities".

\bibliographystyle{alpha}
\bibliography{reference}
\end{document}